\newcommand{\N}{\mathbb{N}}
\newcommand{\R}{\mathbb{R}}
\newcommand{\C}{\mathbb{C}}
\renewcommand{\H}{\mathbb{H}}
\newcommand{\PSH}{\mathrm{PSH}}
\newcommand{\tr}{\mathrm{tr}}
\providecommand{\keywords}[1]
{
	\textbf{\text{Keywords: }} #1
}
\newcommand{\ddbar}{\mathfrak{i} \partial \overline{\partial}}
\theoremstyle{plain}
\newtheorem{theorem}{Theorem}[section]
\newtheorem{proposition}[theorem]{Proposition}
\newtheorem{lemma}[theorem]{Lemma}
\newtheorem{question}[theorem]{Question}
\theoremstyle{definition}
\theoremstyle{definition}
\newtheorem{definition}[theorem]{Definition}
\numberwithin{equation}{section}
\begin{document}
\begin{sloppypar}	
\title{ A uniform estimate for the quaternionic Gauduchon metric with prescribed volume form}

\author{Jiaogen Zhang
}
\affil[]{School of Mathematics,  Hefei University of Technology, \\ Hefei, 230009, PR China.}
	
\affil[]{e-mail: zjgmath@ustc.edu.cn}

\date{\today}

\maketitle
\begin{abstract}
The quaternionic Calabi conjecture, posed by Alesker and Verbitsky \cite{Alesker-Verbitsky (2010)}, predicts that the quaternionic Monge-Amp\`ere equation can always be solved on any compact HKT manifold. Motivated by this conjecture, we will introduce a quaternionic version of the Gauduchon conjecture on any compact $SL(n,\mathbb{H})$-manifold, specifically addressing the existence of quaternionic Gauduchon metrics with prescribed volume form. We reframe this question as a special case of fully nonlinear elliptic equations of second order and subsequently establish a uniform
estimate for the potential function.  
\end{abstract}

\keywords{Uniform estimates, Quaternionic Gauduchon metric, Auxiliary Monge-Amp\`ere equation}	


\section{Introduction}
A hypercomplex manifold is a smooth manifold $M$ of quaternionic dimension $ n $ equipped with three  anti-commuting complex structures $(I,J,K)$ that satisfy the quaternionic relations
\[
IJ=K\,.
\]
A Riemannian metric $ g $ on a hypercomplex manifold $ (M,I,J,K) $ is said to be {hyperhermitian} if it is Hermitian with respect to each of $ I,J $ and $K$. Any hyperhermitian metric induces a sympletic form 
$$
\Omega_{0}=\omega_J+\sqrt{-1}\omega_K
$$ 
associated to $g$, where $\omega_J$ and $\omega_K$ are the fundamental forms of $(g,J)$ and $(g,K)$ respectively. The form $\Omega_{0}$, which is of type $(2,0)$ with respect to $I$, satisfies the { $q$-real} condition $\overline{J\Omega_{0}}={\Omega}_{0} $ (here $ J $ acts on $ \Omega_{0}$ as $J\Omega_{0}(\cdot,\cdot)=\Omega_{0}(J\cdot,J\cdot)$) and is { positive} in the sense that $\Omega_{0}(X,JX)>0$ for every nowhere vanishing real vector field $X$ on $M$. A hyperhermitian manifold $(M,I,J,K,g)$ is hyperk\"ahler if and only if $\Omega_{0}$ is closed, whereas it is called HKT ({hyperk\"ahler with torsion}) if $\Omega_{0}$ is $\partial$-closed, where $\partial$ is taken with respect to $I$. The geometry of HKT manifolds is extensively studied in the literature \cite{Alesker-Verbitsky (2006),Banos,GF,GLV17,Grantcharov-Poon (2000),Howe-Papadopoulos (1996),Ivanov,Lejmi-Weber,Swann,Verbitsky (2002),Verbitsky (2007),Verbitsky (2009)} and the reference therein.

Motivated by the study of  \, \lq\lq canonical\rq\rq \, HKT metrics, in analogy to the famous Calabi conjecture demonstrated by Yau in \cite{yau}, Alesker and Verbitsky proposed in \cite{Alesker-Verbitsky (2010)} to study the {quaternionic Monge-Amp\`ere equation}:
\begin{equation}\label{qMA}
(\Omega_0+\partial\partial_J\varphi)^n={\rm e}^{F+b}\Omega^n_0
\end{equation}
on a compact HKT manifold, where $\partial_J=J^{-1}\bar \partial J$
is the twisted differential operator introduced by Verbitsky \cite{Verbitsky (2002)} and $F\in C^{\infty}(M,\R)$ is given, while $(\varphi,b)\in C^{\infty}(M,\R)\times \R$ is the unknown. Even though the solvability of the quaternionic Monge-Amp\`ere equation in its general form has remained an open problem over the past decade, several partial results have been reported in the literature \cite{Alesker (2013),Alesker-Shelukhin (2013),Alesker-Shelukhin (2017),Alesker-Verbitsky (2010),DinewSroka,GentiliVezzoni,GV,Sroka,S23}.

The solvability of Equation \eqref{qMA} has a nice geometric application, implying the existence of a unique balanced metric $\tilde g$ on a compact HKT manifold $(M,I,J,K,g)$ with holomorphically trivial canonical bundle relative to $I$. Furthermore, the form $\tilde \Omega$ induced by $\tilde g$ is a $\partial \partial_J \varphi$ perturbation of $\Omega$ (cf. \cite{Verbitsky (2009)}).  From this perspective, Equation \eqref{qMA} can be regarded as  the ``quaternionic counterpart'' of the complex Monge-Amp\`ere equation in K\"ahler geometry, where balanced HKT metrics play a role analogous to that of Calabi-Yau metrics in K\"ahler geometry.

It is well-known that a Hermitian metric $\omega$ over a compact complex manifold $X$ is referred to as \lq\lq Gauduchon metric\rq\rq \, if $$\ddbar (\omega^{\dim_{\mathbb{C}} X-1})=0\,.$$ A celebrated result by Gauduchon \cite{Gau} suggests that every Hermitian metric is conformal equivalent to a Gauduchon metric, which is unique within its conformal class except for a constant multiplier. Thus, the Gauduchon metric holds a significant position among Hermitian metrics in the study of non-K\"ahler geometry. Given the similarity to the Gauduchon metric in the complex case, exploring the quaternionic Gauduchon metric within the hyperhermitian framework appears to be a natural direction of study. Recently, Grantcharov, Lejmi, and Verbitsky \cite{GLV17} presented a quaternionic analogue of the Gauduchon metric on compact hyperhermitian manifolds, among many other intriguing ideas and findings.

\begin{definition}
A quaternionic positive $(2,0)$-form $\Omega$ on a compact hypercomplex manifold $(M,I,J,K)$ of quaternionic dimension $n$ is quaternionic Gauduchon  if $\partial\partial_J (\Omega^{n-1})$ vanishes identity.    
\end{definition}

Yau's proof of  Calabi conjecture establishes the existence of a unique K\"ahler-Einstein metric on any compact K\"ahler manifold $(M,\omega_{0})$ with its first Chern class that is either negative or zero. The latter case was subsequently  transformed into finding a new K\"ahler metric $\omega\in [\omega_{0}]$ such that its $n$-power $\omega^n$ is exactly a prescribed smooth positive volume form with a mass equal to $\int_{M}\omega_{0}^n$.  When $M$ no longer admits a K\"ahler metric, in the same spirit as the Calabi-Yau theorem, Gauduchon conjectured in 1984 that there always exists a Gauduchon metric with a prescribed volume form. 
\begin{question}\label{Gau}\cite{Gau} 
 Let $M$  be a compact complex manifold of complex dimension $n$ and $\sigma$ be a given smooth volume form on $M$. Then, there exists a Gauduchon metric $\omega_{G}$ such that  
 \[
 \omega_{G}^n=\sigma.
 \]
\end{question}

By adapting the Hodge theory, Popovici \cite{Popovici} and Tosatti-Weinkove \cite{TW19} observed a close connection between the aforementioned question and a specific form-type Monge-Amp\`ere equation studied by Fu-Wang-Wu \cite{FWW10,FWW15}, as well as the notion of $(n-1)$-PSH (plurisubharmonic in short) functions in the sense of Harvey-Lawson \cite{HL11,HL12}. Ultimately, Question \ref{Gau} was settled by Cherrier \cite{che87} for dimension two, and later by Sz\'ekelyhidi, Tosatti, and Weinkove \cite{STW17} for arbitrary dimensions.

As demonstrated by Obata \cite{Obata (1956)}, every hypercomplex manifold $(M,I,J,K)$ with quaternionic dimension $n$ admits a unique torsion free connection $\nabla^{Ob}$ on $TM$ that satisfies 
$$\nabla^{Ob} I=\nabla^{Ob} J=\nabla^{Ob} K=0\,.$$ 
Such a connection, whose holonomy group satisfies $\text{Hol}(M)\subseteq GL(n,\mathbb{H})$ with $\mathbb{H}$ represents the usual quaternions, is now referred to as Obata connection. An important subgroup of $GL(n,\mathbb{H})$ is its commutator, denoted as $SL(n,\mathbb{H})$. It may, however, reduce to  $SL(n,\mathbb{H})$ which appears in the Merkulov-Schwachh\"ofer \cite{MS99} list. In this case, 
the  hyperhermitian manifold is known as an $SL(n,\mathbb{H})$-manifold. Verbitsky \cite{Verbitsky (2007)} has shown that for every $SL(n,\H)$-manifold $(M,I,J,K)$, the associated complex manifold $(M,I)$ has a holomorphically trivial canonical bundle. In the reference \cite{Ivanov}, a comprehensive and indispensable condition is also presented for a manifold to qualify as an $SL(n,\mathbb{H})$-manifold.

As we have elaborated earlier, it is an established fact that every compact Hermitian manifold admits the existence of at least one Gauduchon metric. Within the realm of hyperhermitian geometry, the analogous result retains its validity and applicability to any compact hyperhermitian manifold. More specifically,
\begin{theorem}\cite{Alesker-Shelukhin (2013)}
Let $(M,I,J,K)$ be a compact hyperhermitian manifold endowed with a quaternionic Hermitian metric $g$. Then, there exists a unique, up to a positive multiplicative constant, nowhere vanishing form $\Theta\in \Lambda^{2n,0}$ on $(M,I)$ which is $q$-real positive such that
\begin{equation*}
\partial\partial_{J}(\Omega^{n-1}\wedge \Bar{\Theta})=0.
\end{equation*}
\end{theorem}

Recently, a profound revelation has been made in \cite{GLV17} regarding the intricate interplay between Gauduchon metrics and their quaternionic analogues, particularly within the framework of $SL(n,\mathbb{H})$ manifolds, shedding light on the fundamental connections between these two fundamental geometric constructs.
 
\begin{theorem}\cite{GLV17}
Let $(M,I,J,K)$ be a compact  $SL(n,\mathbb{H})$ manifold endowed with a quaternionic Hermitian metric $g$. Then, $g$ is  quaternionic Gauduchon   if and only if  $g$  is conformal equivalent to  a Gauduchon metric on the complex manifold $(M,I)$.
\end{theorem}

To streamline subsequent discussions, it is imperative to introduce  the following definition of quaternionic Aeppli cohomology group, which pertains specifically to the cohomology classes of $(2p,0)$-forms on $M$.
\begin{definition}\label{aeppli}
For each integer $p $ ranging from 1 to $n$, we denote by $\Lambda_I^{2p,0}(M)$  the space comprising all smooth $(2p,0)$-forms on $M$ with respect to the complex structure $I$. The quaternionic Aeppli cohomology group, denoted as $H^{2p,0}_{A}(M)$, is defined to be the group
\begin{equation*}
H^{2p,0}_{A}(M)=\frac{\{\phi\in \Lambda_{I}^{2p,0}(M)\mid \partial\partial_J\phi=0 \}}{\partial\Lambda_{I}^{2p-1,0}(M)+\partial_J\Lambda_{I}^{2p-1,0}(M)}\,.   \end{equation*}    
\end{definition}
 
In the reference \cite{GLV17}, it has been  demonstrated that for every integer $p$, the quaternionic Aeppli cohomology group $H^{2p,0}_{A}(M)$ constitutes a finite-dimensional subgroup within  $\Lambda_{I}^{2p,0}(M)$.

It is a fundamental and intriguing question to investigate whether an adaptation of Gauduchon's celebrated volume form conjecture, tailored to the realm of quaternionic geometry, can be formulated and subsequently applied to any given hyperhermitian manifold. 

\begin{question}\label{version-1}
Suppose  $(M,I,J,K,g,\Omega_{0})$ is a connected compact $SL(n,\mathbb{H})$-manifold.  Let $\Omega$ be a quaternionic Gauduchon metric on $M$. Then for every smooth real-valued $F$, there exist a unique Gauduchon metric $\tilde{\Omega}$ and a unique constant $b$ such that
\begin{equation}\label{same aeppli}
\begin{cases}
& [\tilde{\Omega}^{n-1}]=[\Omega^{n-1}] \qquad  \mathrm{in }\, H^{2n-2,0}_{A}(M) \\[2mm]
&\tilde{\Omega}^{n}=e^{F+b}\Omega_{0}^{n}.
\end{cases}    
\end{equation}
\end{question}

To rephrase the aforementioned equality \eqref{same aeppli} in accordance with Definition \ref{aeppli}, we will endeavor to find a new HKT form, denoted as  $\tilde{\Omega}$, that satisfies the following
\begin{equation}\label{Ae coho-1}
\tilde{\Omega}^{n-1}=\Omega^{n-1}+\frac{1}{2}\left(\partial(\partial_J\varphi\wedge \Omega_{0}^{n-2})-\partial_J(\partial\varphi\wedge \Omega_{0}^{n-2})\right) 
\end{equation}
for some smooth potential function $\varphi$. 
In the work by Verbitsky \cite{Verbitsky (2002)}, it has been established that the operators $\partial$ and $\partial_J$ 
exhibit anticommutativity, i.e.
\[\partial\partial_J=-\partial_J\partial.\]
Therefore, the equality \eqref{Ae coho-1} can be  reformulated as:
\begin{equation*}\label{Ae coho-2}
\tilde{\Omega}^{n-1}=\Omega^{n-1}+\partial\partial_J\varphi\wedge \Omega_{0}^{n-2}+\frac{1}{2}(\partial\varphi\wedge \partial_J\Omega_{0}^{n-2}-\partial_J\varphi\wedge \partial\Omega_{0}^{n-2})\,.
\end{equation*}

We proceed by introducing the definition of the Hodge star operator, which is defined with respect to  $\Omega_{0}$. To that end, it is sufficient to exploit its actions specifically  on the $q$-real $(2n-2)$-forms residing on the manifold $M$. At a given point, we consider the inner product of two forms belonging to $\Lambda^{p,0}_I(M)$, denoted as $\langle \cdot,\cdot \rangle_{g}$, and its definition is given by
\[
\langle \alpha,\beta\rangle_g=\frac{1}{p!} g^{r_1 \bar s_1}\cdots g^{r_p \bar s_p} \alpha_{r_1 \cdots r_p} \overline{\beta_{s_1\cdots s_p}}\,, \qquad \text{for every } \alpha,\beta \in \Lambda^{p,0}_I(M)\,,
\]
where any $(p,0)$-form $\alpha $ is locally written as
\[
\alpha=\frac{1}{p!}\alpha_{r_1 \cdots r_p}dz^{r_1}\wedge \dots \wedge dz^{r_p}
\]
and $(g^{r\bar s})$ is the inverse of the Hermitian matrix $(g_{r\bar s})$ induced by the $I$-Hermitian metric $g$. Then, the Hodge star operator $*\colon \Lambda_I^{2n-2,0}(M) \to \Lambda_I^{2,0}(M)$ is explicitly defined as
\[
\alpha \wedge *\beta= \langle \alpha,\beta\rangle_{g}\frac{\Omega_{0}^n}{n!}\,,\qquad \text{for } \alpha,\beta \in \Lambda_I^{2n-2,0}(M)\,.
\]
As demonstrated by Gentili and the author in \cite{GZ22}  that
\begin{equation*}
\Big(\frac{\tilde{\Omega}^{n}}{\Omega_{0}^{n}}\Big)^{n-1}=  \frac{\Big(*(\tilde{\Omega}^{n-1})\Big)^{n}}{\Big(*(\Omega_{0}^{n-1})\Big)^{n}}=\frac{\Big(*\Big(\frac{\tilde{\Omega}^{n-1}}{(n-1)!}\Big)\Big)^{n}}{\Omega_{0}^{n}}\,.
\end{equation*}
Consequently, Equation \eqref{same aeppli} is transformed into
\begin{equation}\label{Monge-Ampere-2}
\Big(*\Big(\frac{\tilde{\Omega}^{n-1}}{(n-1)!}\Big)\Big)^{n}=e^{(n-1)(F+b)}\Omega_{0}^{n}\,.  
\end{equation}
It has been established in \cite{GZ22} that for every $\varphi\in C^{\infty}(M,\R)$
\begin{equation}\label{Hodge3}
\frac{1}{(n-1)!} * \left( \partial \partial_J \varphi \wedge \Omega_{0}^{n-2} \right)=\frac{1}{n-1} \left[ (\Delta_{g} \varphi) \Omega_{0}-\partial \partial_J \varphi \right]\,,
\end{equation}	
where the canonical Laplacian $\Delta_{g}$ is defined by 
\[
\Delta_{g}\varphi:=\frac{n\partial\partial_J \varphi\wedge\Omega_{0}^{n-1}}{\Omega_{0}^{n}}\,.
\]
Given that the HKT metric $\Omega$ is positive on $M$, there correspondingly exists a positive definite form 
\begin{equation}\label{hat omega 0}
H:=\frac{*(\Omega^{n-1})}{(n-1)!}\in \Lambda_{I}^{2,0}(M)
\end{equation}
that is intimately associated with it. Let us introduce a $q$-real $(1,1)$-form
\begin{equation}\label{Z varphi}
Z[\varphi]:=\frac{1}{2(n-1)!}*\{\partial\varphi\wedge \partial_J\Omega_{0}^{n-2}-\partial_J\varphi\wedge \partial\Omega_{0}^{n-2}\}
\end{equation}
which is explicitly defined in terms of linear dependencies on the first-order derivatives of $\varphi$. From the relations stated in Equations \eqref{Hodge3}, \eqref{hat omega 0} and \eqref{Z varphi}, it is immediately evident that the form
\begin{equation}\label{hodge star}
*\Big(\frac{\tilde{\Omega}^{n-1}}{(n-1)!}\Big)=  H+\frac{1}{n-1} \left[ (\Delta_{g} \varphi) \Omega_{0}-\partial \partial_J \varphi \right]+Z[\varphi]
\end{equation}
determines a positive definite element in $\Lambda_{I}^{2,0}(M)$, owing to the positivity of $\tilde{\Omega}^{n-1}$. 
\begin{definition}
A function $\varphi\in C^2(M,\mathbb{R})$ is quaternionic $(n-1)$-PSH with respect to $\hat{\Omega}$ and $\Omega_{0}$ if  $$ H+\frac{1}{n-1}[(\Delta_g \varphi) \Omega_{0}-\partial \partial_J \varphi]+Z[\varphi]\in \Lambda_{I}^{2,0}(M) $$ is positive definite.   
\end{definition}

Having the equality given in Equation \eqref{hodge star} at our disposal, we can restate \eqref{Monge-Ampere-2}  as the following quaternionic $(n-1)$-form Monge-Amp\`ere equation
\begin{equation}\label{n-1 MA}
\begin{cases}
\ \Big(H+\frac{1}{n-1}[(\Delta_g \varphi) \Omega_{0}-\partial \partial_J \varphi]+Z[\varphi] \Big)^{n} = {\rm e}^{(n-1)(F+b)}\Omega_{0}^{n}\,,  \\[2mm]
\ H+\frac{1}{n-1}[(\Delta_g \varphi) \Omega_{0}-\partial \partial_J \varphi]+Z[\varphi] > 0\,.
\end{cases}
\end{equation}
We remark that Equation \eqref{n-1 MA} serves as the quaternionic counterpart of the complex Monge-Amp\`ere equation for $(n-1)$-PSH functions, originally introduced and extensively analyzed by Fu, Wang and Wu in \cite{FWW10,FWW15}.  Further related contributions can be discovered in the literature \cite{KLW22,STW17,TW17,TW19} and the reference therein.

Consider a closed Hermitian manifold $(M,I,g)$   of complex dimension $m=2n$ (where $n\in \N$). Assume that $J$ is an another complex structure defined on $M$  that is compatible with respect to the metric $g$, satisfying the orthogonality condition  $$JI+JI=0.$$  Given the K\"ahler form $\omega$ associated with the metric $g$ and $\omega_{h}$ another specified Hermitian metric on $M$, we introduce the following potential space
\[
\begin{split}
&\PSH_J(M,\omega,\omega_{h})\\
&=\Big\{ \varphi\in C^2(M,\R) : \omega_{h}+\frac{1}{m-1}\big[ (\Delta_{I,g}\varphi)\omega-\frac{1}{2}(\ddbar \varphi-J\ddbar \varphi)\big]+W[\varphi]>0  \Big\}\,.
\end{split}
\]
Here, $\Delta_{I,g}$ represents the canonical Laplacian defined on $(M,I,g)$, which is given by $$\Delta_{I,g}\varphi=\frac{m(\ddbar \varphi-J\ddbar \varphi)\wedge \omega^{m-1}}{2\omega^{m}}\,,$$
and $W[\varphi]$ is a Hermitian tensor  that depends linearly on the first-order derivatives of $\varphi$, namely
\[
W[\varphi]:=\frac{\sqrt{-1}}{2}(\varphi_{i}\bar{\beta}_{j}+\beta_{i}\varphi_{\bar{j}})dz_{i}\wedge d\bar{z}_{j}
\]
for a smooth $(1,0)$ form $\beta$  which is locally expressed as $\beta=\beta_{j}dz_{j}$. For the sake of clarity and convenience, let us denote
\begin{equation*}\label{tilde g}
\tilde{g}_{i\bar j}=h_{i\bar j}+\frac{1}{m-1}[(\Delta_{I,g}\varphi) g_{i\bar j}-\frac{1}{2}(\varphi_{i\bar j}+J^{\bar{l}}_{i}J^{k}_{\bar j}\varphi_{k\bar l })]+\frac{1}{2}(\varphi_{i}\bar{\beta}_{j}+\beta_{i}\varphi_{\bar{j}})\,,   
\end{equation*}
where $h_{i\bar j}$ is locally defined by $\omega_{h}=\sqrt{-1}h_{i\bar j}dz_i\wedge d\bar{z}_j$.

In order to solve the Equation \eqref{n-1 MA}, we need to to obtain the a priori estimates for the following complex $(m-1)$-form Monge-Amp\`ere equation on $(M,I,g)$ 
\begin{equation}\label{new n-1 MA}
\begin{cases}
\ \det(\tilde{g}_{i\bar j})=e^{G}\det(g_{i\bar j})\,,  \\[2mm]
\ \sup_M\varphi=0, \quad  \varphi\in \PSH_{J}(M,\omega,\omega_{h})\,,
\end{cases}
\end{equation}
where $G$ is a given smooth function on $M$. Intuitively, the uniform estimate occupies a pivotal position in the investigation of fully nonlinear elliptic equations, as it serves as a fundamental prerequisite for establishing higher-order regularities.

Inspired by the groundbreaking concept introduced by Guo-Phong-Tong \cite{GPT} in their recent work on the auxiliary Monge-Amp\`ere equation, we present a uniform estimate for \eqref{new n-1 MA} that is solely dependent on the p-entropy of the prescribed volume form and the background data, offering a novel perspective on the problem. Specifically, the main objectives of the present paper are as follows: 
\begin{theorem}\label{main}
Suppose $(M,I,J,K,g,\Omega_{0})$ is a connected compact $SL(n,\mathbb{H})$-manifold.  Given any constant $p>2m$. Let $\varphi\in \PSH_J(M,\omega,\omega_{h})$ be a smooth solution for the $(m-1)$-form Monge-Ampere equation \eqref{new n-1 MA}. Then there exists a constant $C>0$   depends only on $n,\,\omega,\, \omega_{h},\, p$ and $\|e^{2G}\|_{L^1(\log L)^p}$ such that 
\begin{equation*}
\|\varphi\|_{L^{\infty}} \leq C\,.
\end{equation*}
\end{theorem}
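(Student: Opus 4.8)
The plan is to adapt the Guo--Phong--Tong scheme \cite{GPT} to the $(n-1)$-form operator. Since $\sup_M\psi=0$, it suffices to bound $-\inf_M\psi$ from above, and the first step is to recast \eqref{new n-1 MA} as a complex \MA-type equation whose dependence on $\psi$ is \emph{linear}. Put $\hat\omega_\psi=\mathfrak i\sum_{i,j}\hat g_{i\bar j}\,dz_i\wedge d\bar z_j>0$ and write $\Phi=\tfrac12(\ddbar\psi-J\ddbar\psi)$ for the quaternionic Hessian; since $\Delta_{I,g}\psi=\tr_g\Phi$, equation \eqref{new n-1 MA} is precisely $\hat\omega_\psi^{n}=e^F\omega^n$ with $\hat\omega_\psi=\omega_h+\tfrac{1}{n-1}\big((\tr_g\Phi)\,\omega-\Phi\big)$. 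Following Fu--Wang--Wu \cite{FWW10,FWW15}, let $\beta_\psi$ be the unique positive $(1,1)$-form with $\beta_\psi^{n-1}=(n-1)!\,\star_\omega\hat\omega_\psi$. Elementary identities for the Hodge star give $\det_\omega\hat\omega_\psi=(\det_\omega\beta_\psi)^{n-1}$ and $\star_\omega\big((\tr_g\Phi)\omega-\Phi\big)=\tfrac{1}{(n-2)!}\,\Phi\wedge\omega^{n-2}$, so \eqref{new n-1 MA} is equivalent to
\[
\beta_\psi^{n}=e^{\frac{F}{n-1}}\,\omega^{n},\qquad \beta_\psi>0,\qquad \beta_\psi^{n-1}=\Xi_0+\tfrac12(\ddbar\psi-J\ddbar\psi)\wedge\omega^{n-2},
\]
where $\Xi_0=(n-1)!\,\star_\omega\omega_h$ is a fixed positive $(n-1,n-1)$-form. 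The value of this reformulation is twofold: $\beta_\psi^{n}$ is a genuine \MA\ measure that can be compared with $(\omega+\ddbar u)^n$, while $\beta_\psi^{n-1}-\Xi_0=\Phi\wedge\omega^{n-2}$ is exact in the twisted sense, which makes the integrations by parts below legitimate up to torsion terms of $g$ (recall $M$ is only Hermitian).

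With the equation in this form I would run the auxiliary-equation argument of \cite{GPT}. Normalize so that $\int_M e^{\frac{F}{n-1}}\omega^n=\int_M\omega^n=:V$ (the general case only enlarges the final constant). For $s\ge0$ set $\Omega_s=\{\psi<-s\}$ and $A_s=\int_{\Omega_s}e^{\frac{F}{n-1}}\omega^n$, and for each $s$ solve on $M$ the auxiliary complex \MA\ equation
\[
(\omega+\ddbar u_s)^n=\frac{c_s}{A_s}\,\mathbf 1_{\Omega_s}\,e^{\frac{F}{n-1}}\,\omega^n,\qquad \sup_M u_s=0,
\]
with $c_s$ the normalizing constant; this is solvable with a bounded $\omega$-plurisubharmonic $u_s$ by the Hermitian Monge--Amp\`ere solvability theory \cite{TW17}, and --- this being the Hermitian-manifold version of the main estimate of \cite{GPT} --- $\|u_s\|_{L^\infty}\le C$ with $C$ depending only on $n,\omega,p$ and $\|e^F\|_{L^1(\log L)^p}$, uniformly in $s$. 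A comparison argument on $\Omega_s$ --- applying the maximum principle to a test function such as $-\psi-s-\varepsilon(-u_s)-\Lambda A_s^{1/n}$, using the concavity and uniform ellipticity of the $(n-1)$-form operator, the identity $\beta_\psi^{n}=\tfrac{A_s}{c_s}(\omega+\ddbar u_s)^n$ on $\Omega_s$, and the boundary value $\psi=-s$ on $\partial\Omega_s$ --- then yields
\[
\sup_{\Omega_s}(-\psi)\ \le\ s+C\,A_s^{1/n}\big(1+\|e^F\|_{L^1(\log L)^p}\big)^{1/n}.
\]

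Finally, this is fed into the De Giorgi-type iteration of \cite{GPT}: together with the elementary bound $A_{s+t}\le t^{-1}\int_{\Omega_s}(-\psi-s)\,e^{\frac{F}{n-1}}\omega^n$ and the control of $\int_{\Omega_s}(-\psi-s)\,e^{\frac{F}{n-1}}\omega^n$ by the entropy, the displayed inequality closes into a recursion of the form $A_{s+t}\le C\,t^{-n}\big(1+\|e^F\|_{L^1(\log L)^p}\big)\,A_s^{1+\delta}$ for some $\delta>0$, the hypothesis $p>n$ being exactly what makes $\delta>0$; hence $A_s=0$ for $s\ge C_3$, i.e.\ $\psi\ge-C_3$. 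I expect the principal obstacle to be the second step. Because the ``$(n-1)$'' structure couples $\tr_g\Phi$ back into $\hat g$, equation \eqref{new n-1 MA} does \emph{not} reduce to the complex \MA\ equation \eqref{new cma}, so one must verify directly that the $(n-1)$-form operator still obeys the pointwise \MA\ differential inequality and the twisted integration-by-parts identities on which the GPT comparison and iteration rest, with every Hermitian torsion term absorbed; establishing the entropy-only bound for the auxiliary equation on a general compact Hermitian manifold is the other delicate point, and it is here that the PDE (rather than pluripotential-theoretic) nature of \cite{GPT} is essential.
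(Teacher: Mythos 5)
Your outline follows the right general philosophy (Guo--Phong--Tong via auxiliary \MA{} equations), but it defers exactly the two steps that constitute the proof. The first is the structure of the linearization. The paper never passes through the Fu--Wang--Wu reformulation $\beta_\psi^{n-1}=(n-1)!\,\star_\omega\hat\omega_\psi$; there is no integration by parts and hence no torsion terms to absorb. Instead it introduces the tensor $\hat{\Theta}^{i\bar j}=\frac{1}{n-1}\{\tr_{\hat\omega}\omega\cdot g^{i\bar j}-\frac12(\hat g^{i\bar j}+\hat g^{\beta\bar\alpha}J_\beta^{\bar j}J_{\bar\alpha}^{i})\}$ and proves two pointwise facts: after diagonalizing $\hat g$ one gets $\hat\Theta^{i\bar j}=\frac{1}{n-1}\big(\sum_{k\neq i}\mu_k^{-1}\big)\delta_{ij}>0$ with $\det(\hat\Theta^{i\bar j})\geq e^{-F}\det(g^{i\bar j})$ (Lemma \ref{positive det2}), and $\hat\Theta^{i\bar j}\psi_{i\bar j}=n-\tr_{\hat\omega}\omega_h$ (Lemma \ref{L varphi}). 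These two identities are what replace your appeal to ``concavity and uniform ellipticity of the $(n-1)$-form operator'': they are precisely what lets the arithmetic--geometric mean inequality, applied to $\hat\Theta^{i\bar j}(\psi_{s,k})_{i\bar j}$, produce the factor $(-\psi_s)^{1/n}A_k(s)^{-1/n}$ in the maximum-principle comparison. You flag this as ``the principal obstacle'' but do not resolve it, and without it the comparison step does not close.

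The second gap is the auxiliary equation itself. You solve a global equation $(\omega+\ddbar u_s)^n=\frac{c_s}{A_s}\mathbf 1_{\Omega_s}e^{F/(n-1)}\omega^n$ on the compact Hermitian manifold $M$ and invoke a uniform, entropy-only $L^\infty$ bound for $u_s$. On a non-K\"ahler Hermitian manifold that bound is itself a theorem of essentially the same depth as the one being proved, and the normalizing constant $c_s$ in the Hermitian solvability theory is not a priori under control; assuming it is close to circular. The paper avoids this by localizing at the minimum point $x_0$: the auxiliary problems \eqref{definition of psi sk} are Dirichlet problems on a fixed coordinate ball $\mathbb B(x_0,2r_0)$, solvable by Caffarelli--Kohn--Nirenberg--Spruck \cite{CKNS85}, and the only nontrivial input is Ko{\l}odziej's exponential estimate \eqref{alpha invariant} on the ball, whose constants depend only on $n$ and $r_0$. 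The iteration is then run on the sublevel sets of $\psi_s=\psi-\psi(x_0)+c_0|z-x_0|^2-s$ inside the ball, ending with the mass lower bound \eqref{GP22} and a logarithmic test-function argument, rather than your global De Giorgi recursion $A_{s+t}\leq Ct^{-n}A_s^{1+\delta}$. If you insist on the global route you must first prove the Hermitian entropy-only estimate for the auxiliary equation; with the tools actually cited, only the local route goes through.
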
	
	
It is worth emphasizing that the attainment of higher-order regularity for the Equation \eqref{new n-1 MA} poses a significantly greater challenge in its general form, partly attributed to the intricacies arising from the derivatives of $J$. In the  specific instance where the term $Z$ in \eqref{n-1 MA} vanishes, the question of solvability has been resolved by G. Gentili and the author \cite{GZ22} for flat hyperk\"ahler manifolds,  and additionally  by Fu-Xu-Zhang  \cite{FXZ22}  for the more general class of hyperk\"ahler manifolds.

The rest of paper is organized as follows. In Section \ref{Auxiliary and exp}, we commence by reducing the proof of Theorem \ref{main} to the proof of Proposition \ref{lower bound} outlined below. Subsequently, we delve into auxiliary real Monge-Amp\`ere equations and the exponential integrability of their solutions. In Section \ref{Comparison lemma and Trudinger-like inequality}, we will derive a pivotal comparison lemma alongside a Trudinger-type inequality, both of which will play a fundamental role in our subsequent analysis. In Section \ref{proof of main proposition}, we establish an $L^{q}$ (for $q>0$) estimate by utilizing the standard weak Harnack inequality. This estimate is then leveraged to prove Proposition \ref{lower bound}. \bigskip

\noindent 
{\bf Acknowledgements.}  The  author is  grateful to his thesis advisor  Professor Xi Zhang for his inspirational discussions and helpful suggestions. This paper is supported by Initial Scientific Research Fund of Young Teachers in Hefei university of Technology of No.JZ2024HGQA0119.

\section{Auxiliary real Monge-Amp\`ere equations and  exponential integrability}\label{Auxiliary and exp}

Assume that $\varphi\in C^2(M,\mathbb{R})$ serves as a solution to the Equation \eqref{new n-1 MA}. To establish Theorem \ref{main}, it suffices to derive a lower bound for  $\varphi$. By compactness, we can assume that the function $\varphi$ attains its minimum value at a certain point  on $M$, denote as $z_{\mathrm{min}}$, i.e. $$\varphi(z_{\mathrm{min}})=\inf_{z\in M}\varphi(z).$$ 
Hence, in the subsequent sections, we are expected to exploit the lower bound for $\varphi(z_{\mathrm{min}})$  in the manner as follows
\begin{proposition}\label{lower bound}
Given $p>2m$, there exists a positive constant $C$ which depends only on $m$, $\omega$, $\omega_{h}$, $p$ and $\|e^{2G}\|_{L^1(\log L)^p}$  such that
\[
\qquad \varphi(z_{\mathrm{min}})\geq -C.
\]
\end{proposition}

The proof of Proposition \ref{lower bound} is somewhat extensive and necessitates additional efforts, we will carry out it at the end of present paper.

\subsection{Auxiliary real Monge-Amp\`ere equations}	

Let $V$ be an open subset that is isomorphic to an open subset within $\mathbb{C}^{2n}$, with the property that $z_{\mathrm{min}} $ lies within $V$. Then, there exists a uniform positive constant $r_0$ for which the ball $B=B(z_{\mathrm{min}},2r_0)$ is contained entirely within $ V$. If necessary, Shrink the value of $r_0$ to ensure that the following inequality holds on $\overline{B}$:
\begin{equation}\label{coordinates}
\frac{1}{C_{1}}\,\omega_{_{\C^m}}\leq \omega_{h}\leq C_{1}\, \omega_{_{\C^m}}\,, \qquad \omega_{\C^m}=\ddbar |z|^2\,.
\end{equation}
Here, $C_{1}$ is a positive constant that depends  solely on $\omega_{h}$.

We introduce a small, positive constant $c_0$, whose specific value will be determined later.	For any $s $ within the interval  $ (0,c_0r_0^{2})$, we consider the following auxiliary function
\begin{equation*}
\varphi_s(z)=\varphi(z)-\varphi(z_{\mathrm{min}})+c_0|z-z_{\mathrm{min}}|^2-s, \qquad z\in \overline{B}.
\end{equation*}
Firstly, observe that for any $z\in B\setminus B(z_{\mathrm{min}},r_{0})$, we have the inequality $\varphi_s(z)\geq c_0r_0^{2}-s>0$. Therefore, the sub-level set $B_s$, which is defined by
\begin{equation*} 
B_s=\{z\in B: \varphi_s(z)<0\},
\end{equation*}
is an open subset of $B$. Furthermore, $B_s$ is non-empty and has a compact closure within $B$.

For each index $i=1, \cdots, m$, we express the complex number $z_{i} $ as $ x_{2i-1}+\sqrt{-1}x_{2i}$, ensuring that the set $\{x_{a}\}_{a=1}^{2m}$ constitutes a local real coordinate chart. In this local real coordinate chart, we denote the standard volume form in $\mathbb{R}^{2n}$ by ${\rm d x}$. To streamline upcoming discussions and for the sake of notational conciseness,   we abbreviate the expression
\[
{\rm d}\sigma= (\det g_{i\bar{j}})^{2}{\rm dx}
\]
and introduce two pivotal quantities that will be recurrently used
\begin{equation}\label{two quantity}
\Phi_{s}=\int_{B_s}(-\varphi_s)e^{2G} {\rm d}\sigma,  \qquad   \Psi_{s}=\int_{B_s}e^{2G}{\rm d}\sigma\,,
\end{equation}
for any $s\in (0,c_0r_0^{2})$.

\begin{lemma}\label{Phi A}
For every $0<t<s<c_0r_0^{2}$, we have 
\begin{equation*}
t\Psi_{s-t}\leq \Phi_{s}\,.
\end{equation*}
\end{lemma}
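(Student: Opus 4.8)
The key observation is that the functions $\psi_s$ form a decreasing family under the shift parameter: since
\[
\psi_s(z)=\psi(z)-\psi(x_0)+c_0|z-x_0|^2-s = \psi_{s-t}(z)-t,
\]
we have $\psi_s=\psi_{s-t}-t$ on all of $\overline{\mathbb B}$. The plan is to exploit this identity together with the monotonicity of the integral against the nonnegative measure $e^{F}\omega^n$.

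First I would record the inclusion of sublevel sets. From $\psi_s=\psi_{s-t}-t$ we get
\[
\mathbb B_{s-t}=\{\psi_{s-t}<0\}=\{\psi_s<-t\}\subseteq\{\psi_s<0\}=\mathbb B_s,
\]
and on the smaller set $\mathbb B_{s-t}$ the bound $\psi_s<-t$, i.e. $-\psi_s>t$, holds pointwise. Note also that on $\mathbb B_s$ one has $-\psi_s>0$, so the integrand $(-\psi_s)e^{F}$ in the definition of $A(s)$ is nonnegative throughout $\mathbb B_s$.

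Then I would simply chain the estimates: discarding the (nonnegative) contribution from $\mathbb B_s\setminus\mathbb B_{s-t}$ and using $-\psi_s\ge t$ on $\mathbb B_{s-t}$,
\[
A(s)=\int_{\mathbb B_s}(-\psi_s)e^{F}\omega^n\ \ge\ \int_{\mathbb B_{s-t}}(-\psi_s)e^{F}\omega^n\ \ge\ t\int_{\mathbb B_{s-t}}e^{F}\omega^n\ =\ t\,\Phi(s-t),
\]
which is the claimed inequality. There is essentially no obstacle here: the only point requiring care is the bookkeeping of the parameter shift and the sign of $-\psi_s$; both are immediate from the definitions, and the rest is monotonicity of the Lebesgue integral. (The condition $0<t<s<4c_0r_0^2$ guarantees $s-t\in(0,4c_0r_0^2)$, so $\mathbb B_{s-t}$ is defined and has compact closure in $\mathbb B$, as established before the lemma.)
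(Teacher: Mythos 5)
Your proof is correct and follows essentially the same route as the paper: the identity $\psi_s=\psi_{s-t}-t$ gives $-\psi_s\ge t$ on $\mathbb B_{s-t}\subseteq\mathbb B_s$, and the inequality follows by monotonicity of the integral against the nonnegative measure $e^F\omega^n$. Your write-up is in fact slightly more careful than the paper's (which states the inclusion and pointwise bound somewhat tersely), but the content is identical.
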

\begin{proof}
The proof is somewhat straightforward, we are about to provided a sketch here for the sake of completeness. According to the definition of $B_s$,  it follows that $\varphi_{s-t} $ is strictly negative (i.e. $t<-\varphi_s$) on $B_{s-t}$. Therefore,
\[
t\int_{B_{s-t}}e^{2G}{\rm d}\sigma\leq \int_{B_{s-t}} (-\varphi_s)e^{2G}{\rm d}\sigma \leq \int_{B_s}(-\varphi_s)e^{2G}{\rm d}\sigma\,.
\]
This shows our desired inequality.
\end{proof}

Select a sequence of smooth and strictly positive functions  $\{\tau_k(x)\}_{k=1}^{\infty}$ defined on $\mathbb{R}$ that converges pointwise to the function $\tau_{\infty}(x):=x\cdot \chi_{R_{+}}(x)$ as $k\rightarrow \infty$, where $\chi_{R_{+}}(x)$ denotes the indicator function for the non-negative real numbers. It is convenient to select a specific candidate for $\tau_k(x)$ that possesses the following properties:
\[\tau_{k}(x)=
\left\{\begin{array}{ll}
x+\frac{1}{k}\qquad &\mathrm{for}~x\in [0,+\infty)\,, \\[1mm]
\frac{1}{2k}\qquad &\mathrm{for}~x\in (-\infty,-\frac{1}{k}]\,, \\[1mm]
\end{array}\right.
\]
as well as
\[
\frac{1}{2k} \leq \tau_{k}(x)\leq \frac{1}{k} \qquad  \mathrm{for}~ x\in [-\frac{1}{k},0].
\]

As outlined in \cite{GPT,GP22}, we proceed by letting $\varphi_{s,k}$ represent a family of solutions belonging to the class $C^{\infty}(\overline{B})$ for the following nondegenerate Dirichlet problems  associated with the auxiliary real Monge-Amp\`ere equations 
\begin{equation}\label{definition of varphi sk}
\left\{ \begin{array}{ll}
\varphi_{s,k}\in C^{\infty}(\overline{B}) \,\, \textrm{strictly convex}, \\[2mm]
\det\left( \frac{\partial^2\varphi_{s,k}}{\partial x_{a}\partial x_{b}}\right)=\frac{1}{\Phi_{s,k}}\tau_k(-\varphi_s)e^{2G}(\det g_{i\bar{j}})^{2} \quad \textrm{in} \, B, \\[2mm]
\varphi_{s,k}=0 \quad \textrm{on}  \quad \partial B,\\[2mm]
\end{array}\right.
\end{equation}
where $\Phi_{s,k}$ denotes a normalization constant, which is specifically defined as follows:
\[\Phi_{s,k}:=\int_{B}\tau_k(-\varphi_s)e^{2G}{\rm d}\sigma\,.\]
It is well-established that the uniqueness and existence of solutions $\varphi_{s,k}$ for \eqref{definition of varphi sk} are guaranteed by the renowned theorem of Caffarelli-Nirenberg-Spruck \cite{CKS84}. According to the definition of $\Phi_{s,k}$,  it becomes evident that 
\begin{equation}\label{unit volume}
\int_{B}\det\Big( \frac{\partial^2\varphi_{s,k}}{\partial x_{a}\partial x_{b}}\Big){\rm dx}=1\,,    
\end{equation}
as well as
\begin{equation}\label{phi sk and phi s}
\Phi_{s,k} \longrightarrow \Phi_{s} \qquad \mathrm{as} \,\, k\rightarrow \infty. 
\end{equation} 

Observe that
\[
-\varphi_{s}(z)=\varphi(z_{\mathrm{min}})-\varphi(z)-c_{0}|z-z_{\mathrm{min}}|^2+s\leq s<c_0r_0^{2}\]
in $B_s$ for every $s<c_0r_0^{2}$. Therefore, the integrands in \eqref{two quantity} are both bounded, thereby enabling the estimation of  $\Phi_{s}$ and $\Psi_{s}$ in terms of $\|e^{2G}\|_{L^{1}}$. This implies that, given a sufficiently large  $k$,  the boundedness of $\Phi_{s,k}$ can be inferred from \eqref{phi sk and phi s}, as well as the previous considerations. 

To derive the comparison Lemma \ref{comparison lemma} below, the subsequent local estimates pertaining to $\varphi_{s,k}$ are of paramount importance.
\begin{lemma}\label{c0 and c1}\cite[Lemma 11]{GP22} There exists a dimensional constant $C(m)$ such that
\begin{equation*}
\inf_{B(z_{\min},2r_{0})}\varphi_{s,k}\geq -C(m)r_{0}\,, \quad \sup_{B(z_{\min},r_{0})}|\nabla^{Ob} \varphi_{s,k}|\leq C(m)\,.
\end{equation*}
    
\end{lemma}
\subsection{Exponential integrability}
Exponential integrability has been demonstrated to be a very powerful tool in complex geometry, particularly the systematic investigations for the complex Monge-Amp\`ere equations by Guo-Phong-Tong \cite{GPT} very recently.

Assume that $\Omega$ is a bounded, smooth, and pseudoconvex domain within the complex space $\mathbb{C}^{m}$. Let  $\varphi\in \PSH(\Omega)\cap C^{2}(\Omega)\cap C^{0}(\bar{\Omega})$ be a function that possesses the properties of vanishing on the boundary, i.e. $\varphi|_{\partial\Omega}=0$ and 
\begin{equation*}
    \mathcal{M}(\varphi):=\int_{\Omega}(\ddbar \varphi)^{m}=1\,.
\end{equation*}
A celebrated theorem by Ko{\l}odziej, as presented in \cite{Ko98}, shows that there exists positive constants $C'$ and $\alpha'$, which are solely dependent on the geometric characteristics of the domain $\Omega$, such that
\begin{equation}\label{alpha invariant 1}
\int_{\Omega}\exp\{-\alpha'\varphi\}\omega^{m}\leq C'\,. 
\end{equation}
It is noteworthy that Ko{\l}odziej's proof of \eqref{alpha invariant 1} heavily relies on the the framework of pluripotential theory, whereas an elegant and purely classical PDE-based proof has been provided by Wang-Wang-Zhou in \cite{WWZ21}.

In contrast to the complex setting, the results pertaining to exponential integrability are significantly more robust in the real case. For a comprehensive account of these findings, we recommend referring to the literature \cite{Wang}. Consequently, we can formulate the following:
\begin{lemma}\cite{Wang}
Let $\varphi_{s,k}$ be a solution of Equation \eqref{definition of varphi sk}. Then there exist positive constants $C_{2}$ and $\alpha$ that depending only on the geometric data such that
\begin{equation}\label{alpha invariant}
 \int_{B}\exp\{-\alpha\varphi_{s,k} \}{\rm d}\sigma\leq C_{2}\,.   
\end{equation}
\end{lemma}

\section{Comparison lemma and Trudinger's  inequality}\label{Comparison lemma and Trudinger-like inequality}
\subsection{A comparison lemma}
Following the approach adopted in \cite{STW17} for demonstrating the Gauduchon conjecture, we aim to introduce a highly instrumental tensor $\hat{\Theta}$ with
\[
\hat{\Theta}^{i\bar j}=\frac{1}{m-1}\Big\{\mathrm{tr}_{\tilde{\omega}}\omega\cdot g^{i\bar j}-
\frac{1}{2}\Big(\tilde{g}^{i\bar j}+\tilde{g}^{k\bar l}J_{k}^{\bar j}J_{\bar l}^{i} \Big) 
\Big\}\,.
\]
Regarding the tensor $\hat{\Theta} $, we can establish the following crucial inequality.
\begin{lemma}\label{positive det2}
At every point, the tensor  $\hat{\Theta}$ is positive definite, and it satisfies 
\begin{equation*}\label{det hat Theta ij}
\det(\hat{\Theta}^{i\bar j})\geq e^{-G}\det(g^{i\bar j})\,.
\end{equation*}
\end{lemma}
\begin{proof}
Let $\lambda_1, \cdots,\lambda_{m}$ denote the eigenvalues of the tensor $\tilde\omega $ with respect to the background metric $\omega$. In the vicinity of a specified point $z_{\mathrm{min}}$, there exists a local $I$-holomorphic coordinate system $(z_{1}, \cdots,z_{m})$ such that at $z_{\mathrm{min}}$,  the components of the metrics of $g$ and $\tilde{g}$ satisfy $g_{i\bar j}=\delta_{ij}$ and $\tilde g_{i\bar j} =\lambda_i\delta_{ij}$. As a consequence,
\begin{equation}\label{hat Theta ij}
\begin{split}
\hat{\Theta}^{i\bar j}|_{z_{\mathrm{min}}}=&\frac{1}{m-1}\Big\{ \sum_{i}\frac{1}{\lambda_i}\delta_{ij}-\frac{1}{2}\Big(\frac{1}{\lambda_i}\delta_{ij}+\frac{1}{\lambda_{k}}\delta_{kl}J_{k}^{\bar j}J_{\bar  l}^{i}\Big)\Big\}\\ 
=&\frac{1}{m-1}\Big(\sum_{k\neq i} \frac{1}{\lambda_{k}}\Big)\delta_{ij}\,.
\end{split}
\end{equation} 
Applying the arithmetic and geometric mean inequality, we can observe that
\[
\det(\hat{\Theta}^{i\bar j})=\frac{1}{(m-1)^m}\prod_{i=1}^{m}\Big(\sum_{k\neq i} \frac{1}{\lambda_{k}}\Big)\geq \prod_{i=1}^{m} \frac{1}{\lambda_i}=e^{-G}\det(g^{i\bar j})\,.
\]
This ends our proof of the lemma.
\end{proof}
	
\begin{lemma}\label{L varphi}
Consider the linearized operator $\mathcal{L}$ defined as $\mathcal{L}v=\hat{\Theta}^{i\bar j}v_{i\bar j},$
we have the following
\begin{equation}\label{l varphi}
\mathcal L\varphi=m-\tr_{\tilde    \omega}\omega_{h}-\tr_{\tilde\omega}W[\varphi]\,.
\end{equation}
\end{lemma}
\begin{proof}
A direct calculation yields that 
\[
\begin{split}
\tr_{\tilde \omega}\Big(\tilde\omega-\omega_{h}-W[\varphi]\Big) = & \frac{1}{2} \tr_{\tilde \omega}\Big(\ddbar \varphi-J\ddbar \varphi  \Big) 
=  \frac{1}{2}\Big(\tilde{g}^{i\bar j}\varphi_{i\bar j} +\tilde{g}^{i\bar j}J^{\bar{l}}_{i}J^{k}_{\bar j}\varphi_{k\bar l }) \Big)\\
=& \frac{1}{2}\Big(\tilde{g}^{i\bar j}+\tilde{g}^{k\bar l}J_{k}^{\bar j}J_{\bar l}^{i}\Big) \varphi_{i\bar j} = \hat{\Theta}^{i\bar j}\varphi_{i\bar j}\,.
\end{split}
\]
Then the proof is completely.
\end{proof}

In the subsequent discussion, our focus will be directed towards a comparative analysis of the functions $\varphi_{s,k}$ and $\varphi_s\,$. This will be pretty crucial in our proof of Proposition \ref{lower bound} at the end of this paper.
\begin{lemma}\label{comparison lemma}
Let $\epsilon=\Phi_{s,k}^{\frac{1}{2m+1}}(\frac{m}{2m+1})^{\frac{2m}{2m+1}}$. The inequality
\begin{equation}\label{comparison}
-\varphi_s\leq \epsilon (-\varphi_{s,k}+\varepsilon)^{\frac{2m}{2m+1}} \quad \mathrm{in} \, B
\end{equation}
holds true for a specific constant $\varepsilon>0$ depending only on $m,$ $\Phi_{s,k}$ and $\beta$.
\end{lemma}
\begin{proof}
Firstly, we direct our attention towards the following auxiliary function defined as
\[\eta=-\epsilon \Gamma(\varphi_{s,k})-\varphi_s,\]
where  $\Gamma:(-\infty,0]\rightarrow \mathbb{R}_{+}$ is  a smooth, non-increasing, concave function, whose precise form will be specified subsequently. Notably, this function $\eta$ is designed such that its maximum value on $\overline{B}$ satisfies $\max_{\overline{B}}\eta\leq 0$. 

Upon unraveling the definition of $\varphi_s$, it becomes immediately apparent that $\eta\leq 0$ holds true within the set $\overline{B} \setminus B_s$. Hence, we can  reasonably assume the maximum of $\eta$ is achieved at an interior point within  $B_s\,$, which we denote as $z_{\max}$. Utilizing the maximum principle in conjunction with the fact that $\hat{\Theta}^{i\bar j}$ is positive definite, we can deduce that the value of $L\eta$ at the point $z_{\max}$ satisfies $\mathcal L\eta|_{z_{\max}}\leq 0$.

Adopting the same local coordinate system in the vicinity of $z_{\max}$ as was utilized in the proof of Lemma \ref{det hat Theta ij},  we can subsequently derive that
\begin{equation*}
\begin{split}
\mathcal L|z-z_{\mathrm{min}}|^2|_{z_{\max}}&=\sum_{i}\tilde{\Theta}^{i\bar i}=\frac{1}{m-1}\sum_{i}\Big\{\mathrm{tr}_{\tilde{\omega}}\omega\cdot g^{i\bar i}-\frac{1}{2}\Big(\tilde{g}^{i\bar i}+\tilde{g}^{k\bar l}J_{k}^{\bar i}J_{\bar l}^{i} \Big)\Big\}\\
&= \frac{1}{m-1} \sum_{i}\Big\{\mathrm{tr}_{\tilde{\omega}}\omega\cdot g^{i\bar i}-
\tilde{g}^{i\bar i}\Big\}\\
&=\frac{1}{m-1}\{\tr_{\tilde\omega}\omega\cdot \tr_{\omega}\omega_{_{\C^m}}-\tr_{\tilde\omega}\omega_{_{\C^m}}\}|_{z_{\max}}\,.   
\end{split}
\end{equation*}
Taking into account the coordinates as specified in \eqref{coordinates}, we can further deduce that
\begin{equation}\label{L Z squre}
\mathcal L|z-z_{\mathrm{min}}|^2|_{z_{\max}}\leq  \frac{C_{1}}{m-1}\tr_{\tilde \omega}\omega|_{z_{\max}}\cdot \tr_{\omega}\omega_{h}|_{z_{\max}}.
\end{equation}
In order to facilitate the  subsequent discussions, we introduce the following notations
\begin{equation*}
  [f]_{i}=\frac{\partial f}{\partial z_{i}}\,, \qquad  [f]_{\bar j}=\frac{\partial f}{\partial \bar{z}_{j}}\,, \qquad [f]_{i\bar j}=\frac{\partial^2 f}{\partial z_{i}\partial \bar{z}_{j}}
\end{equation*}
for any function $f$ defined on $M$ that is of class $C^2$, and for any indices $i,j=1, \cdots, m$. Consequently,  at the point $z_{\max}$, we obtain
\begin{equation}\label{first der}
    [\varphi]_{i}=\epsilon \Gamma'[\varphi_{s,k}]_{i}-\epsilon c_0 (z_{i}-z_{\min,i})\,,
\end{equation}
as well as
\begin{equation*}
\begin{split}
0\geq & \mathcal L\eta|_{z_{\max}}
= -\epsilon \mathcal{L}\Big(\Gamma(\varphi_{s,k})\Big) -\mathcal L\varphi-c_0\mathcal L|z-z_{\mathrm{min}}|^{2} \\
=  &-\epsilon \Gamma'\hat{\Theta}^{i\bar j}[\varphi_{s,k}]_{i\bar j}-\epsilon \Gamma''\hat{\Theta}^{i\bar j}[\varphi_{s,k}]_{i}[\varphi_{s,k}]_{\bar j} -m+\mathrm{tr}_{\tilde\omega}\omega_{h}+\tr_{\tilde\omega}W[\varphi]-c_0\mathcal L|z-z_{\mathrm{min}}|^{2} \\
\geq &-\epsilon m \Gamma' (\det(\hat{\Theta}^{i\bar j}))^{\frac{1}{m}} (\det[\varphi_{s,k}]_{i\bar j})^{\frac{1}{m}}-m+\tr_{\tilde{\omega}} \{\omega_{h}-\frac{c_0C_{1}}{m-1}(\tr_{\omega}\omega_{h})\omega+W[\varphi]\}\\
\geq &-\epsilon m \Gamma' \frac{(-\varphi_s)^{\frac{1}{2m}}}{2\Phi_{s,k}^{\frac{1}{2m}}}-m+\tr_{\tilde{\omega}} \{\omega_{h}-\frac{c_0C_{1}}{m-1}(\tr_{\omega}\omega_{h})\omega+W[\varphi]\} 
\end{split}
\end{equation*}
where the second inequality arises from the facts that $\Gamma$ is a smooth, non-increasing, and concave function, the arithmetic and geometric mean inequality, and the inequality \eqref{L Z squre}, the last inequality we have used Lemma \ref{positive det2} and the following well-known inequality
\[
2^{2m} \det\Big(\frac{\partial^2\varphi_{s,k}}{\partial z_{i}\partial z_{j}}\Big)^{2}\geq \det\Big(\frac{\partial^2\varphi_{s,k}}{\partial x_{a}\partial x_{b}}\Big)\,.
\]

Now, we proceed to  estimate $\tr_{\tilde{\omega}}W[\varphi]$ in terms of $\tr_{\tilde{\omega}}\omega_{h}$. First of all, the equality given in \eqref{first der} entails that
\begin{equation}\label{trace 1}
\begin{split}
\tr_{\tilde{\omega}}W[\varphi]&=\mathrm{Re}\Big(\tilde{g}^{i\bar{j}}[\varphi]_{i}\bar{\beta}_{j}\Big)\\
&=\epsilon\Gamma'\mathrm{Re}\Big(\tilde{g}^{i\bar{j}}\bar{\beta}_{j}[\varphi_{s,k}]_{i}\Big)- \epsilon c_0 \mathrm{Re}\Big(\tilde{g}^{i\bar{j}}\bar{\beta}_{j}(z_{i}-z_{\min,i})\Big).   
\end{split}
\end{equation}
On the one hand, suppose there exists a positive constant $C_{\beta}$ that depends  solely on $\beta$ and $\omega_{h}$ such that
\begin{equation*}
\sqrt{-1}\beta\wedge \bar{\beta}\leq C^{2}_{\beta}\omega_{h}\,.
\end{equation*}
Furthermore, the regularity of $\varphi_{s,k}$ as stated in Lemma \ref{c0 and c1}, combined with the Cauchy-Schwarz inequality, ensures that
\begin{equation}\label{first term}
\begin{split}
\epsilon\Gamma'\mathrm{Re}\Big(\tilde{g}^{i\bar{j}}\bar{\beta}_{j}[\varphi_{s,k}]_{i}\Big)&\geq   \epsilon\Gamma'\Big|\mathrm{Re}\Big(\tilde{g}^{i\bar{j}}\bar{\beta}_{j}[\varphi_{s,k}]_{i}\Big)\Big|\\
  &\geq  \epsilon\Gamma'(0)\Big(\tr_{\tilde{\omega}}\sqrt{-1}\beta\wedge\bar{\beta}\Big)^{\frac{1}{2}}\Big(\tr_{\tilde{\omega}}\sqrt{-1}\partial \varphi_{s,k}\wedge\bar{\partial}\varphi_{s,k}\Big)^{\frac{1}{2}}  \\
  &\geq \epsilon\Gamma'(0)C(m)C_{\beta}\tr_{\tilde{\omega}}\omega_{h}\,.
\end{split}
\end{equation}
On the other hand, since $z_{\max} $ lies within the ball  $ B(z_{\mathrm{min}},2r_0)$, it follows that
\begin{equation}\label{second term}
    - \epsilon c_0 r_0 \mathrm{Re}\Big(\tilde{g}^{i\bar{j}}\bar{\beta}_{j}(z_{i}-z_{\min,i})\Big)\geq -2\epsilon c_0C_{\beta}\tr_{\tilde{\omega}}\omega_{h}\,.
\end{equation}
Substituting the inequalities \eqref{first term} and \eqref{second term} into \eqref{trace 1}, we derive that
\begin{equation*}
   \tr_{\tilde{\omega}}W[\varphi]\geq  -\Big(-\epsilon\Gamma'(0)C(m)C_{\beta}+2\epsilon c_{0} r_{0}C_{\beta}\Big)\tr_{\tilde{\omega}}\omega_{h}\,.
\end{equation*}

Now, we choose a sufficiently small positive constant $c_0$ such that
\[
\frac{c_0C_{1}}{m-1}(\tr_{\omega}\omega_{h}) \omega\leq \frac{1}{3}\omega_{h}, \qquad 2\epsilon c_0C_{\beta}\leq \frac{1}{3}\,.
\]
Taking into consideration all these factors, it becomes immediately apparent that
\begin{equation*}
0\geq -\epsilon m \Gamma' \frac{(-\varphi_s)^{\frac{1}{2m}}}{2\Phi_{s,k}^{\frac{1}{2m}}}-m+\frac{1}{3}\Big(1+3\epsilon\Gamma'(0)C(m)C_{\beta}\Big)\tr_{\tilde{\omega}}\omega_{h}\,.
\end{equation*}
Therefore, to guarantee that $\eta|_{z_{\max}}=-\epsilon\Gamma-\varphi_{s}\leq 0$, i.e. $-\varphi_{s}\leq \epsilon\Gamma$ at the point $z_{\max}$,  it suffices to impose the following conditions on $\Gamma$:
\begin{equation}\label{condition}
-\epsilon \Gamma'(\epsilon\Gamma)^{\frac{1}{2m}}=2\Phi_{s,k}^{\frac{1}{2m}}, \qquad \Gamma'(0)=-\frac{1}{3\epsilon C(m)C_{\beta}}.    
\end{equation}
\begin{itemize}
\item For the first equality of \eqref{condition}: the hypothesis on $\epsilon$ implies that $\Gamma$ satisfies the following ODE:
\[
\Big(\Gamma^{\frac{2m+1}{2m}} \Big)'=-\frac{2m+1}{m}\epsilon^{-\frac{2m+1}{2m}}\Phi_{s,k}^{\frac{1}{2m}}=-1.
\]
Hence, $\Gamma(\varphi_{s,k})$ can be precisely defined as
\begin{equation}\label{gamma}
\Gamma(\varphi_{s,k})=(-\varphi_{s,k}+\varepsilon)^{\frac{2m}{2m+1}}, \quad \Gamma'(0)=-\frac{2m}{2m+1}\varepsilon^{-\frac{1}{2m+1}}   \end{equation}
for some constant $\varepsilon>0$ to be specified below.  
\item For the second equality of \eqref{condition}: it is sufficient to define
\begin{equation*}
\begin{split}
\varepsilon:&=\Big(\frac{6m}{2m+1}C(m)C_{\beta}\epsilon\Big)^{2m+1}\\
&=\Big(\frac{6m}{2m+1}C(m)C_{\beta}\Big)^{2m+1}\Big(\frac{m}{2m+1}\Big)^{2m}\Phi_{s,k}.  
\end{split}
\end{equation*}
In other word,
\[
\frac{1}{3\epsilon C(m)C_{\beta}}=\frac{2m}{2m+1}\varepsilon^{-\frac{1}{2m+1}}\,.
\]
Therefore, $\varepsilon$ can be derived directly.
\end{itemize}
Consequently, our proof of the proposition is completed. 
\end{proof}

\subsection{Trudinger's inequality}
The primary objective of this subsection is dedicated to establishing the validity of the following Trudinger's inequality.
\begin{proposition}
For every $s\in (0,c_0r_0^{2})$, there exist two strictly positive constants $C_{4}$ and $\beta$, which depend only on $m$ and $r_0$, such that
\begin{equation}\label{trudinger}
\int_{B_s} \exp\Big\{\beta \frac{(-\varphi_s)^{\frac{2m+1}{2m}}}{\Phi_{s}^{\frac{1}{2m}}}\Big\}{\rm d}\sigma\leq C_{4}\,.
\end{equation}
\end{proposition}
\begin{proof}
Firstly, the inequality given in \eqref{comparison} asserts that
\[\frac{(-\varphi_s)^{\frac{2m+1}{2m}}}{\Phi_{s,k}^{\frac{1}{m}}}\leq c(m) (-\varphi_{s,k})\]
for a positive dimensional constant $c(m)$. Select a sufficient small positive constant $\beta$ such that it satisfies the condition $\beta c(m)\leq \alpha$, where $\alpha$ is the constant provided in \eqref{alpha invariant}. Then, utilizing inequality \eqref{alpha invariant}, we observe that
\[\int_{B_s} \exp\Big\{\beta \frac{(-\varphi_s)^{\frac{2m+1}{2m}}}{\Phi_{s,k}^{\frac{1}{2m}}}\Big\}{\rm d}\sigma\leq \int_{B} \exp\{\beta c(m)(-\varphi_{s,k}) \} {\rm d}\sigma\leq C_{4}\,.
\]
Here $C_{4}$ is a positive constant independent of $k$. As $k$ approaches infinity, this concludes the proof of the proposition.
\end{proof}

Having previously established that $\Psi$ can be bounded in terms of $\Phi$ in Lemma \ref{Phi A}, we now shift our focus to the converse scenario. Specifically, we present the following
\begin{lemma}
Given $p>2m$, let $\delta_0=\frac{p-2m}{2pm}>0$. Then, there exist a constant $C_{5}$ that depends only on $m$, $p$, $\omega$, $\omega_{h}$, $\|e^{2G}\|_{L^1(\log L)^p}$ and $r_0$, such that 
\begin{equation}\label{A Phi}
\Phi_{s}\leq C_{5} \Psi_{s}^{1+\delta_0}\,.
\end{equation}
\end{lemma}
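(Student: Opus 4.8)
The plan is to estimate $A(s)=\int_{\mathbb{B}_s}(-\psi_s)e^F\omega^n$ by playing the Trudinger‐type inequality \eqref{trudinger} against the $L^1(\log L)^p$–integrability of $e^F$ through a single cut‑off parameter. First I would record two inputs. From \eqref{trudinger} and Chebyshev's inequality, for every $t>0$,
\[
\big|\{z\in\mathbb{B}_s:\ -\psi_s(z)>t\}\big|_{\omega^n}\le C_3\exp\!\Big(-\beta\,\frac{t^{\frac{n+1}{n}}}{A(s)^{1/n}}\Big),
\]
so the super‑level sets of $-\psi_s$ become doubly‑exponentially thin once $t$ exceeds a fixed multiple of $A(s)^{1/(n+1)}$. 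On the other hand, $\|e^F\|_{L^1(\log L)^p}<\infty$ gives the elementary absolute‑continuity bound $\int_E e^F\omega^n\le L\,|E|_{\omega^n}+(\log L)^{-p}\|e^F\|_{L^1(\log L)^p}$ for every Borel $E\subset\mathbb{B}$ and every $L\ge e$; optimizing in $L$ (essentially $L\sim|E|_{\omega^n}^{-1}$) yields $\int_E e^F\omega^n\le C\,(1+\log(1/|E|_{\omega^n}))^{-p}$ whenever $|E|_{\omega^n}$ is small, with $C$ depending on $n,p$ and $\|e^F\|_{L^1(\log L)^p}$.

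Next I would fix $M>0$ and split $A(s)=\mathrm{I}+\mathrm{II}$ according to whether $-\psi_s\le M$ or $-\psi_s>M$ on $\mathbb{B}_s$. Since $0\le -\psi_s\le M$ on the first region, $\mathrm{I}\le M\,\Phi(s)$. For $\mathrm{II}$ I would use the layer‑cake identity $\mathrm{II}=M\int_{\{-\psi_s>M\}}e^F\omega^n+\int_M^{\infty}\big(\int_{\{-\psi_s>t\}}e^F\omega^n\big)\,dt$ and feed the super‑level estimate into the absolute‑continuity bound: provided $M\ge c\,A(s)^{1/(n+1)}$ (which guarantees $1+\log(1/|\{-\psi_s>t\}|_{\omega^n})\ge\tfrac12\beta t^{(n+1)/n}A(s)^{-1/n}$ for $t\ge M$), one gets $\int_{\{-\psi_s>t\}}e^F\omega^n\le C\,A(s)^{p/n}t^{-p(n+1)/n}$ for $t\ge M$; since $p(n+1)/n>1$ the $t$–integral converges and $\mathrm{II}\le C\,A(s)^{p/n}M^{\,1-p(n+1)/n}$.

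Finally I would optimize the cut‑off in $A(s)\le M\Phi(s)+C\,A(s)^{p/n}M^{-\gamma}$ with $\gamma:=\tfrac{p(n+1)}{n}-1$. Equalizing the two terms forces $M\sim A(s)^{1/(n+1)}\Phi(s)^{-n/(p(n+1))}$, which also satisfies the constraint $M\gtrsim A(s)^{1/(n+1)}$ once $\Phi(s)$ is small; substituting back, both terms are comparable to $A(s)^{1/(n+1)}\Phi(s)^{1-n/(p(n+1))}$, the exponents matching precisely because $\tfrac{p}{n}-\tfrac{\gamma}{n+1}=\tfrac{1}{n+1}$ and $\tfrac{\gamma n}{p(n+1)}=1-\tfrac{n}{p(n+1)}$. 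Hence $A(s)^{n/(n+1)}\le C\,\Phi(s)^{1-n/(p(n+1))}$, and raising to the power $\tfrac{n+1}{n}$ gives $A(s)\le C\,\Phi(s)^{(1-n/(p(n+1)))(n+1)/n}=C\,\Phi(s)^{1+\delta_0}$ with $\delta_0=\tfrac{p-n}{pn}$, which is \eqref{A Phi}. The complementary regime where $\Phi(s)$ is bounded below is trivial once one has an a priori bound $A(s)\le C$, itself a consequence of \eqref{trudinger} and \eqref{L1 estimate} via a crude Young's inequality (splitting $(-\psi_s)e^F$ into an $\exp$–part absorbed by \eqref{trudinger} and an $e^F(1+\log^+e^F)^p$–part absorbed by the entropy).

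I expect the conceptual heart of the matter — and the step most likely to need care — to be exactly this interaction: $L^1(\log L)^p$–integrability of $e^F$ on its own converts a set of small Lebesgue measure only into a gain of a \emph{power of a logarithm}, not a power of the measure, so the genuine polynomial improvement $\Phi(s)^{\delta_0}$ is produced entirely by composing this with the doubly‑exponential thinness of the super‑level sets of $-\psi_s$ coming from \eqref{trudinger} (crucially normalized by $A(s)^{1/n}$). Checking that the resulting exponent is \emph{exactly} $1+\delta_0$ then reduces to the elementary exponent arithmetic in the optimization of the cut‑off $M$ indicated above, together with keeping track of the constants coming from \eqref{trudinger} and the entropy.
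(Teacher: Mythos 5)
Your argument is correct, but it reaches \eqref{A Phi} by a genuinely different route than the paper. The paper's proof is the integral-form version of yours: it applies the pointwise Young-type inequality \eqref{Young} with $v=\beta U_s$, $U_s=(-\psi_s)^{(n+1)/n}A(s)^{-1/n}$, integrates against $\omega^n$, and uses \eqref{trudinger} together with the entropy to obtain the weighted moment bound $\int_{\mathbb{B}_s}(-\psi_s)^{\frac{p(n+1)}{n}}e^F\omega^n\le C_5A(s)^{\frac{p}{n}}$; a single application of H\"older's inequality with exponent $\frac{p(n+1)}{n}$ then gives $A(s)\le C\,A(s)^{\frac{1}{n+1}}\Phi(s)^{1-\frac{n}{p(n+1)}}$, which rearranges to \eqref{A Phi} by exactly the exponent computation you perform at the end. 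You instead dualize: Chebyshev applied to \eqref{trudinger} yields decay of $\abs{\{-\psi_s>t\}}$, the finite entropy yields the absolute-continuity bound $\int_E e^F\lesssim (1+\log(1/\abs{E}))^{-p}$, and a layer-cake decomposition with an optimized cutoff $M$ recombines these; your super-level-set estimate is precisely what the paper's moment bound encodes, so the two arguments are equivalent in substance, with the paper's being shorter and free of case analysis while yours makes more transparent where the polynomial gain $\Phi(s)^{\delta_0}$ comes from. One small remark on your final case analysis: you do not actually need \eqref{L1 estimate} or a separately derived a priori bound for the regime where the optimizer violates $M\gtrsim A(s)^{\frac{1}{n+1}}$. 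That regime forces $\Phi(s)\gtrsim 1$, and substituting $M=c\,A(s)^{\frac{1}{n+1}}$ into your own inequality $A(s)\le M\Phi(s)+C A(s)^{\frac{p}{n}}M^{-\gamma}$ gives $A(s)^{\frac{n}{n+1}}\le c\,\Phi(s)+Cc^{-\gamma}\le C'$ (since $\Phi(s)\le\norma{e^F}_{L^1}$ is bounded), hence $A(s)\le C''\le C''\epsilon_0^{-(1+\delta_0)}\Phi(s)^{1+\delta_0}$ there as well; this also sidesteps the issue that \eqref{L1 estimate} controls $\int(-\psi)\omega^n$ rather than $\int(-\psi_s)e^F\omega^n$.
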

\begin{proof}
Firstly, we assert that for every   $p>2m$, there exists a constant $C_{6} $ that depends only on $m$, $p$, $\beta,$ $\|e^{2G}\|_{L^1(\log L)^p}$ and $r_0$, satisfying the following inequality:
\begin{equation}\label{phi to psi}
\int_{B_s}(-\varphi_s)^{\frac{p(2m+1)}{2m}}e^{2G}{\rm d}\sigma\leq C_{6}\Phi_{s}^{\frac{p}{2m}}\,.
\end{equation}

Given this assertion, one can subsequently apply the H\"older inequality to derive
\begin{equation*}
\begin{split}
\Phi_{s} =& \int_{B_s} (-\varphi_s) e^{2G}{\rm d}\sigma \\
\leq &  \Big(\int_{B_s} (-\varphi_s)^{\frac{p(2m+1)}{2m}} e^{2G}{\rm d}\sigma\Big )^{\frac{2m}{p(2m+1)}} \Big(\int_{B_s} e^{2G}{\rm d}\sigma \Big)^{1-\frac{2m}{p(2m+1)}}\\
\leq & C_{6}^{\frac{m}{p(2m+1)}} \Phi_{s}^{\frac{1}{2m+1}} \Psi_{s}^{1-\frac{2m}{p(2m+1)}}\,.
\end{split}
\end{equation*}
As a result, we obtain the desired inequality \eqref{A Phi} with $C_{5}=C_{6}^{\frac{1}{p}}$.

To demonstrate the aforementioned assertion, for the sake of notational simplicity, we denote
$$U_s:=\frac{(-\varphi_s)^{\frac{(2m+1)}{2m}}}{\Phi_{s}^{\frac{1}{2m}}}.$$
Then, the established inequality \eqref{trudinger} can be reformulated as
\begin{equation}\label{trudinger 2}
\int_{B_s} \exp\{\beta U_s\} {\rm d}\sigma \leq C_{4}\,.    
\end{equation}
To establish the targeted inequality \eqref{phi to psi}, it suffices to check the following equivalent inequality
\begin{equation}\label{p form of Us}
\int_{B_s} U_s^{p} e^{2G}{\rm d}\sigma\leq C_{6}\,.
\end{equation}
By adapting the well-known Young's inequality,  Guo-Phong \cite{GP22} demonstrated that that for every non-negative function $v$ on $M$, the following inequality holds true:
\begin{equation}\label{Young}
\frac{1}{2^p} v^p e^{2G}\leq e^{2G}(1+|G|)^p +C_p e^{v}\,,
\end{equation}
where $C_p$ is a positive constant depending only on $p$. Considering the substitution $v=\beta U_s$ in \eqref{Young} and integrating the resulting expression over $B_s$, one can subsequently deduce from \eqref{trudinger 2} that
\[
\begin{split}
&\frac{\beta^p}{2^p} \int_{B_s} U_s^{p} e^{2G} {\rm d}\sigma\\
&\leq \int_{B_s} e^{2G} (1+|G|)^p {\rm d}\sigma +C_p \int_{B_s} \exp\{\beta U_s\} {\rm d}\sigma \\
&\leq \|e^{2G}\|_{L^1(\log L)^p} +C_pC_{4}\,.
\end{split}
\]
Then, the desired inequality \eqref{p form of Us} follows with $C_{6}=\frac{2^p}{\beta^p}(\|e^{2G}\|_{L^1(\log L)^p} +C_pC_{4})$.  This validates the stated assertion, thereby concluding our proof of the lemma.
\end{proof}

Having established the preceding lemma, in conjunction with Lemma \ref{Phi A}, we are able to derive the following result:
\begin{lemma}
Define $\Psi(s):=\Psi_{s}$ as an induced function over the open interval $(0,c_0r_0^2)$. Then, $\Psi: (0,c_0r_0^2)\rightarrow \R_{+}$ is a monotone increasing function, possessing the following  properties
\begin{equation*}
\lim_{s\rightarrow 0^{+}}\Psi(s)=0, \qquad t\Psi(s-t) \leq C_{5} \Psi(s)^{1+\delta_0}.
\end{equation*}
\end{lemma}
\begin{proof}
We adopt a similar line of arguments as employed  in the proof of Lemma 5 in \cite{GP22}. Now, we proceed to verify  
\begin{itemize}
\item[(a)] the monotonicity of $\Psi$;
\item[(b)] the convergence of $\Psi(s)$ to $0$ as $s$ approaches to $0^{+}$. 
\end{itemize}

Firstly, by the definition of $B_s$,  we observe that the relation $B_{t}\subset B_s$ holds true whenever $0<t<s<c_0r_0^2$, which in turn implies (a). 

Secondly, due to the choice of $z_{\mathrm{min}}$, the function $$\varphi(z)-\varphi(z_{\mathrm{min}})+c_0|z-z_{\mathrm{min}}|^2$$ 
is non-negative within the set $B$ and, notably,  it vanishes at the single point $z_{\mathrm{min}}$. This yields that $\underset{s>0}{\cap} \overline{B_s}=\{z_{\mathrm{min}}\}$ and whence (b) is proved.
\end{proof}
	
By adopting a standard iterative approach akin to those employed in \cite{De57} and \cite{Ko98}, B. Guo and D. Phong \cite{GP22} successfully  demonstrate the following
\begin{lemma}\cite[Lemma 6]{GP22}\label{gp}
Suppose  the function $\phi(t)$ is monotonically increasing on the interval $(0,t_{0})$ and fulfills the following properties
\begin{itemize}
\item[(i)] $\lim_{t\rightarrow 0^{+}}\phi(t)=0$;
\item[(ii)] $\phi$ is positive on the whole interval $(0,t_{0})$;
\item[(iii)] for any $ 0<t'<t<t_{0}$, $t'\phi(t-t')\leq N\big[\phi(t)\big]^{1+\delta}$ for some positive constants $N$ and $\delta$.
\end{itemize}
Then we have
\[
\phi(t_{0})\geq \Big(\frac{t_{0}(1-2^{-\delta})}{2N} \Big)^{\frac{1}{\delta}}.
\]
\end{lemma}
A direct implication of  Lemma \ref{gp} is the existence of a uniformly positive lower bound for $\Psi_{c_{0}r_{0}^2}$ as the following
\begin{lemma}\label{lower bound of psi}
There exists a positive constant $C_{2}$ depends only on $c_0$, $r_0$, $C_{5}$ and $\delta_0$, such that
\begin{equation}\label{GP22}
\Psi_{c_0r_0^2}\geq C_{2}\,.
\end{equation}
\end{lemma}
\begin{proof}
The proof of Lemma 7 in \cite{GP22} directly applies to this context and, for brevity, we omit its repetition here.
\end{proof}

\section{$L^q$ estimate and proof of Proposition \ref{lower bound}}\label{proof of main proposition}
In this section, we aim to derive the lower bound estimate $\varphi(z_{\min})$ as exhibited in Proposition \ref{lower bound}. Our proof will crucially depend on not only the  inequality \eqref{GP22} presented in Lemma \ref{lower bound of psi} but also on the $L^q$ estimate  elaborated upon  in the subsequent subsection.
\subsection{The $L^q$ estimate}

\begin{lemma}\label{L q estimate}
Let $\varphi$ be the smooth solution of \eqref{new n-1 MA}. Then, there exist positive constants $q, C$ depend only on the given data, such that
\begin{equation}\label{L q integrability}
\int_{M}(-\varphi)^{q}{\rm d}\sigma\leq C\,.
\end{equation}
\end{lemma}
\begin{proof}
For the proof, we will adopt an approach inspired by \cite{Sze} and \cite{GZ22}, which focuses on deriving $L^q\,$ estimates for a broader class of Monge-Amp\`ere type equations.

Firstly, the quaternionic $(n-1)$-plurisubharmonicity assumption $\varphi\in \PSH_J(M,\omega,\omega_{h})$  inherently implies that
\[
\tr_{\omega}\omega_{h}+\Delta_{I,g}\varphi+\tr_{\omega}W[\varphi]\geq 0\,.
\]
This, in turn, yields a lower bound for the drift Laplacian of $ \varphi $:
\begin{equation}\label{eq_in_C0}
\Delta_{d}\varphi:=\Delta_{I,g}\varphi+\tr_{\omega}W[\varphi] \geq -\sup_{M} \tr_{\omega}\omega_{h}:=-C\,.
\end{equation}
It is evident that $\Delta_{d}$ is an elliptic operator of second order. 

Now, we proceed to verify the desired inequality \eqref{L q integrability} by using the weak Harnack inequality. Select an appropriate open covering of $ M $ comprised of coordinate balls $ \{ B_{2r_i}(x_i)\}_{i=1}^{N}  $ such that a subset of these balls,  namely $ \{ B_i=B_{r_i}(x_i) \}_{i=1}^{N} $, still covers $ M $. Since $ \varphi $ is non-positive and satisfies the inequality \eqref{eq_in_C0}, application of  the weak Harnack inequality (cf. \cite[Theorem 9.22]{GT}) leads to the conclusion that
\[
\|\varphi\|_{L^q(B_i)}=\Big( \int_{B_i}(-\varphi)^q \Big)^{\frac{1}{q}}\leq C\Big( \inf_{B_i}(-\varphi)+1 \Big)\,,
\]
where $q,C>0 $ are constants that depend only on the covering and the background metric.

Observing that we have assumed $ \sup_M \varphi=0 $, it follows that there exists at least one index $ j $ for which $ \inf_{B_j}(-\varphi)=-\sup_{B_j}\varphi=0 $. Hence, we have $ \|\varphi\|_{L^q(B_j)}\leq C $. This bound can be extended to all balls $ B_i $ such that $ B_i\cap B_j\neq \emptyset $, indeed one can bound $ \inf_{B_i}(-\varphi) $ from above in terms of $ \|\varphi\|_{L^q(B_j)}\,$:
\[
\begin{split}
\inf_{B_i} (-\varphi)&\leq \inf_{B_i\cap B_j}(-\varphi) \leq \frac{1}{\mathrm{vol}(B_i\cap B_j)^{\frac{1}{q}}}\|\varphi\|_{L^q(B_i\cap B_j)}\\
&\leq \frac{1}{\mathrm{vol}(B_i\cap B_j)^{\frac{1}{q}}}\|\varphi\|_{L^q(B_j)}\,.    
\end{split}
\]
Reiterating the argument and in a finite number of steps, we will have bound $ \|\varphi\|_{L^q(B_i)} $ for each $ i=1,\cdots,N $, and thus also $ \|\varphi\|_{L^q(M)}. $ This completes our proof of inequality \eqref{L q integrability}.
\end{proof}

\subsection{Proof of Proposition \ref{lower bound}}

Without loss of generality, we can assume that  $\varphi(z_{\mathrm{min}})<-2$.  If necessary, shrink the ball $B$  to ensure that its radius  $r_0\leq \frac{1}{2}.$ Taking into account all these factors, we observe that for every point $z\in B_{c_0r_0^2}\,$, it follows that 
\[-\varphi(z)-c_0|z-z_{\mathrm{min}}|^2> -\varphi(z_{\mathrm{min}})-c_0r_0^{2}> 2-c_0 \cdot (\frac{1}{2})^2> 1.\]
Therefore, the function
\begin{equation}\label{positive condition}
\log \frac{-\varphi(z)-c_0|z-z_{\mathrm{min}}|^2}{(-\varphi(z_{\mathrm{min}})-c_0r_0^2)^{\frac{1}{2}}}\geq \frac{1}{2} \log \Big(-\varphi(z_{\mathrm{min}})-c_0r_0^2 \Big)
\end{equation}
is strictly  positive in $B_{c_0r_0^2}$. 

Select a number $p$ such that $p>2m$, and then we define
\[
v:= p \Big(\log \frac{-\varphi(z)-c_0|z-z_{\mathrm{min}}|^2}{(-\varphi(z_{\mathrm{min}})-c_0r_0^2)^{\frac{1}{2}}}\Big)^{\frac{1}{p}}>0\,.
\]
Given that the inequality $pt^{\frac{1}{p}}\leq  qt+ C_{p,q}$ holds for all $t\geq 0$, with  $C_{p,q}$ being a positive constant $C_{p,q}$ that depends only on both $p$ and $q$, where $q$ is a constant as specified  in Lemma \ref{L q estimate}, we can then apply  subsequently apply this inequality to $v$, and as a consequence,   the inequality \eqref{Young} leads to the following
\begin{equation}\label{Guo-Phong type inequality}
\begin{split}
&\frac{p^p}{2^p}\log \frac{-\varphi(z)-c_0|z-z_{\mathrm{min}}|^2}{(-\varphi(z_{\mathrm{min}})-c_0r_0^2)^{\frac{1}{2}}} e^{2G}\\ &=\frac{1}{2^p} v^p e^{2G} \leq e^{2G}(1+|G|)^p+ C_p e^{v} \\
& \leq e^{2G}(1+|G|)^p +C_p \exp\Big\{q\log \frac{-\varphi(z)-c_0|z-z_{\mathrm{min}}|^2}{(-\varphi(z_{\mathrm{min}})-c_0r_0^2)^{\frac{1}{2}}}+C_{p,q} \Big\}\\
&=e^{2G}(1+|G|)^p+C_p\exp\{C_{p,q}\}\Big(\frac{-\varphi(z)-c_0|z-z_{\mathrm{min}}|^2}{(-\varphi(z_{\mathrm{min}})-c_0r_0^2)^{\frac{1}{2}}}\Big)^{q}\,.
\end{split}   
\end{equation}
By integrating over  $B_{c_0r^2_0}\,$, we conclude that
\begin{equation*}\label{final}
\begin{split}
&\frac{p^p}{2^{p+1}}C_{2}\log (-\varphi(z_{\mathrm{min}})-c_0r^2_0)  \\ 
&\leq \int_{B_{c_0r^2_0}} \frac{p^p}{2^p}\log \frac{-\varphi(z)-c_0|z-z_{\mathrm{min}}|^2}{(-\varphi(z_{\mathrm{min}})-c_0r_0^2)^{\frac{1}{2}}} e^{2G}{\rm d}\sigma\\
& \leq \|e^{2G}\|_{L^1(\log L)^p}+\frac{C_p\exp\{C_{p,q}\}}{\Big(-\varphi(z_{\mathrm{min}})-c_0r_0^{2}\Big)^{\frac{q}{2}}} \int_{B_{c_0r^2_{0}}} \Big(-\varphi(z)-c_0|z-z_{\mathrm{min}}|^2\Big)^{q}{\rm d}\sigma\\
& \leq \|e^{2G}\|_{L^1(\log L)^p}+\frac{C_{7}C_p\exp\{C_{p,q}\}}{(-\varphi(z_{\mathrm{min}})-c_0r_0^{2})^{\frac{q}{2}}}\,, 
\end{split}
\end{equation*}
which yields the desired inequality stated in this proposition. Here, the second line utilizes inequalities \eqref{GP22} and \eqref{positive condition}, while the third line stems from the application of \eqref{Guo-Phong type inequality}. The fourth line, in turn, is a direct consequence of \eqref{L q integrability}. This concludes our proof of the proposition.
\qed

\end{sloppypar}	
\end{document}